\DeclareMathAlphabet{\EuRm}{U}{eur}{m}{n}
\SetMathAlphabet{\EuRm}{bold}{U}{eur}{b}{n}
\newtheoremstyle{slplain}% name
 {.5\baselineskip\@plus.2\baselineskip\@minus.2\baselineskip}% Space above
 {.5\baselineskip\@plus.2\baselineskip\@minus.2\baselineskip}% Space below
 {\slshape}% Body font
\numberwithin{equation}{section}
\theoremstyle{slplain}
\newtheorem{thm}[equation]{Theorem}  % Numbered with the equation counter
\newtheorem{lem}[equation]{Lemma}         
\newtheorem{prop}[equation]{Proposition}  
\theoremstyle{definition}
\theoremstyle{remark}
\newtheorem{notation}[equation]{Notation}
\newcommand{\thmref}{Theorem~\ref}
\newcommand{\propref}{Proposition~\ref}
\newcommand{\notref}{Notation~\ref}
\newcommand{\secref}{Section~\ref}
\newcommand{\iso}{\approx}
\DeclareMathOperator*{\colim}{colim}
\DeclareMathOperator{\Map}{Map}
\newcommand{\pullback}[3]{#1\mathbin{\mathord{\times}_{\!#2}}#3}
\newcommand{\mappullback}[3]{\pullback{\Map#1}{\Map#2}{\Map#3}}
\newcommand{\Btilde}{\widetilde{B}}
\newcommand{\ftilde}{\tilde{f}}
\newcommand{\gtilde}{\tilde{g}}
\newcommand{\Bhat}{\widehat{B}}
\newcommand{\CW}{\mathrm{CW}}
\newcommand{\Period}{\rlap{\enspace .}}
\begin{document}

\title{Functorial CW-approximation}

\author{Philip S. Hirschhorn}

\address{Department of Mathematics\\
   Wellesley College\\
   Wellesley, Massachusetts 02481}

\email{psh@math.mit.edu}

\urladdr{http://www-math.mit.edu/\textasciitilde psh}

\date{August 8, 2015}

\begin{abstract}
  The usual construction of a CW-approximation is functorial up to
  homotopy, but it is not functorial.  In this note, we construct a
  functorial CW-approximation.  Our construction takes inclusions of
  subspaces into inclusions of subcomplexes, and commutes with
  intersections of subspaces of a fixed space.
\end{abstract}

\maketitle

\tableofcontents

%--------------------------------------------------------------------
%--------------------------------------------------------------------
\section{Introduction}
\label{sec:intro}

A \emph{CW-approximation} to a topological space $B$ is a CW-complex
$\Btilde$ together with a weak equivalence $\Btilde \to B$.  The usual
construction of a CW-approximation is functorial up to homotopy, but
it is not functorial.  In this note, we construct a functorial
CW-approximation.  Our construction takes inclusions of subspaces into
inclusions of subcomplexes (see \thmref{thm:subspace}), and commutes
with intersections of subspaces of a fixed space (see
\thmref{thm:Intersect}).

We construct a CW-approximation to a space using a construction that
functorially factors a map $A \to B$ as $A \to \Btilde \to B$ where $A
\to \Btilde$ is a relative CW-complex and $\Btilde \to B$ is a weak
equivalence; applying this to the map $\emptyset \to B$ produces a
CW-approximation $\Btilde \to B$ to $B$.

We actually define two such factorizations.  The first is for
arbitrary maps $A \to B$ (see \thmref{thm:cwapprox}).  If $A$ is a
nonempty CW-complex, though, then the relative CW-complex $A \to
\Btilde$ that it produces will not, in general, be the inclusion of a
subcomplex.  Thus, we construct a different functorial factorization
in \thmref{thm:subcomp} for maps $A \to B$ in which $A$ is a
CW-complex; in the factorization $A \to \Btilde \to B$ that it
produces, the relative CW-complex $A \to \Btilde$ is the inclusion of
a subcomplex.

We show in \thmref{thm:subspace} that if the factorization of
\thmref{thm:cwapprox} is used to construct a functorial
CW-approximation (by factoring the maps with domain the empty space),
then this construction turns an inclusion of a subspace into an
inclusion of a subcomplex, i.e., if $B$ is a subspace of $B'$, then
$\Btilde$ is a subcomplex of $\Btilde'$.  Thus, it defines a
functorial CW-approximation for pairs, triads, etc.  We also show that
this operation commutes with taking intersections of subspaces of a
fixed space (see \thmref{thm:Intersect}).

%--------------------------------------------------------------------
%--------------------------------------------------------------------
\section{The main theorems}
\label{sec:theorem}

%--------------------------------------------------------------------
\subsection{The first factorization}
\label{sec:FirstFact}

\begin{thm}
  \label{thm:cwapprox}
  Every map $f\colon A \to B$ has a functorial factorization $A
  \xrightarrow{j} \Btilde \xrightarrow{p} B$ such that $j$ is a relative
  CW-complex and $p$ is a weak equivalence.
\end{thm}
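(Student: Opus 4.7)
The plan is to run a version of Quillen's small object argument adapted to topological spaces, using the generating set $I = \{i_n \colon S^{n-1} \hookrightarrow D^n\}_{n \geq 0}$ (with the convention $S^{-1} = \emptyset$, so $i_0 \colon \emptyset \to D^0$). Starting from $p_0 = f \colon A_0 := A \to B$, at stage $k$ I would let $S_k$ denote the set of all commutative squares
$$
\begin{array}{ccc}
S^{n-1} & \xrightarrow{\alpha} & A_k \\
i_n \downarrow & & \downarrow p_k \\
D^n & \xrightarrow{\beta} & B
\end{array}
$$
(with $n \geq 0$), and form the pushout
$$
A_{k+1} \;:=\; A_k \cup_{\coprod_{S_k} S^{n-1}} \coprod_{S_k} D^n,
$$
equipped with the induced map $p_{k+1} \colon A_{k+1} \to B$. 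Setting $\Btilde := \colim_k A_k$ and letting $p \colon \Btilde \to B$ be the induced map gives a factorization in which each step attaches cells along their boundaries; after reindexing the resulting cells by dimension, $j \colon A \to \Btilde$ acquires the structure of a relative CW-complex.

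To see that $p$ is a weak equivalence I would verify that it has the right lifting property with respect to every $i_n$, which implies it is a trivial Serre fibration and in particular a weak equivalence. Given a square $(\alpha, \beta) \colon i_n \to p$, compactness of $S^{n-1}$ together with the fact that each $A_k \hookrightarrow A_{k+1}$ is a closed inclusion implies that $\alpha$ factors through some $A_k$. The factored square is an element of $S_k$, and the corresponding summand of the pushout $A_{k+1}$ provides the required lift $D^n \to A_{k+1} \subseteq \Btilde$ of $\beta$.

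For functoriality, given a map of arrows $(u, v) \colon f \to f'$, I would inductively construct $u_k \colon A_k \to A'_k$ compatible with $v$ by the rule that the cell indexed by $(\alpha, \beta) \in S_k$ is sent to the cell of $A'_{k+1}$ indexed by $(u_k \circ \alpha, v \circ \beta) \in S'_k$; together with $u_k$ itself, this determines $u_{k+1}$ uniquely on the pushout. Passing to colimits yields $\tilde v \colon \Btilde \to \Btilde'$ satisfying $\tilde v j = j' u$ and $p' \tilde v = v p$, and functoriality in $(u, v)$ is immediate from the definition. The only genuinely substantive step is the compactness argument in the second paragraph; everything else amounts to bookkeeping around the small object argument.
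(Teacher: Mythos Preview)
Your argument has a genuine gap in the claim that ``after reindexing the resulting cells by dimension, $j \colon A \to \Btilde$ acquires the structure of a relative CW-complex.'' This is false for the construction you describe. At stage $k$ you attach cells of \emph{all} dimensions, so at stage $k+1$ a low-dimensional cell may be attached along a map whose image meets the interior of a high-dimensional cell from stage $k$. Concretely, if a $2$-cell is attached at stage $1$, then at stage $2$ one of the indexing squares has $\alpha \colon S^{0} \to A_{1}$ landing in the interior of that $2$-cell, and the resulting $1$-cell cannot be placed in any $1$-skeleton relative to $A$. What you obtain is a relative \emph{cell} complex, not a relative CW-complex; the paper makes exactly this distinction in \secref{sec:FirstFact}.

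The paper's fix is to attach only $n$-cells at stage $n$: one sets $A_{-1}=A$ and builds $A_{n}$ from $A_{n-1}$ by attaching an $n$-cell for every square $(S^{n-1}\to A_{n-1},\, D^{n}\to B)$. This guarantees that attaching maps of $n$-cells land in a space built from $A$ using only cells of dimension ${<}\,n$, so $A \to \Btilde = \colim_{n} A_{n}$ is a genuine relative CW-complex by construction. The price is that $p$ is no longer a Serre fibration, so your RLP argument does not apply; instead the paper shows directly that $A_{n}\to B$ is $n$-connected for each $n$ and then uses compactness of spheres to conclude that $\Btilde \to B$ is a weak equivalence. Your functoriality argument, on the other hand, carries over unchanged to the paper's construction.
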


To obtain a CW-approximation $\Btilde \to B$ to a space $B$, you apply
the factorization of \thmref{thm:cwapprox} to the map $\emptyset \to
B$.  We show in \thmref{thm:subspace} that if $B$ is a subspace of
$B'$ then the map $\Btilde \to \Btilde'$ is the inclusion of a
subcomplex, and we show in \thmref{thm:Intersect} that this operation
commutes with taking intersections of subspaces of a fixed space.

The outline of the proof of \thmref{thm:cwapprox} follows that of the
standard construction of a CW-approximation, but instead of choosing
maps of spheres that represent elements of homotopy groups to be
killed by attaching disks, we attach disks using all possible such
maps.  Thus, we attach many more cells than are required, but the
result is that our construction is functorial.

This construction is a cross between the usual construction of a
functorial-only-up-to-homotopy CW-approximation to a space and the
small object argument used to factorize maps in model categories
(\cite{MCL}*{Prop.~10.5.16}).  The standard small object argument
would produce a factorization into a relative cell complex (in which
the attaching maps of cells do not, in general, factor through a
subspace of lower dimensional cells) followed by a map that is both a
weak equivalence and a fibration; our construction produces a relative
CW-complex followed by a weak equivalence.  The proof of
\thmref{thm:cwapprox} is in \secref{sec:proof}.

%--------------------------------------------------------------------
\subsection{The second factorization}
\label{sec:SecondFact}

If the space $A$ is nonempty, then even if it is a CW-complex, the
space $\Btilde$ produced by \thmref{thm:cwapprox} will not generally
be a CW-complex, because there is no restriction on how the cells
attached to construct $\Btilde$ out of $A$ meet the cells of $A$.
Thus, we will also prove the following theorem.

\begin{thm}
  \label{thm:subcomp}
  Every map $f\colon A \to B$ such that $A$ is a CW-complex has a
  functorial factorization $A \xrightarrow{j} \Btilde \xrightarrow{p}
  B$ such that $j$ is the inclusion of a subcomplex of a CW-complex
  and $p$ is a weak equivalence, where ``functorial'' means that it is
  natural with respect to diagrams
  \begin{displaymath}
    \xymatrix{
      {A} \ar[r]^{f} \ar[d]
      & {A'} \ar[d]\\
      {B} \ar[r]_{g}
      & {B'}
    }
  \end{displaymath}
  in which $f\colon A \to A'$ is a cellular map of CW-complexes.
\end{thm}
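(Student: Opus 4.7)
The plan is to adapt the inductive cell-attaching construction used in the proof of \thmref{thm:cwapprox}, but to start with $A$ (equipped with its given CW structure) in place of the empty relative skeleton and to restrict the attaching maps of the newly added cells to be cellular. The cells of $A$ together with the newly attached cells will then give $\Btilde$ the structure of a CW-complex in which $A$ is a subcomplex, and the proof that $p\colon \Btilde \to B$ is a weak equivalence will run parallel to the proof of \thmref{thm:cwapprox}, with cellular approximation invoked wherever that proof uses an arbitrary map out of a sphere.

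Concretely, I would set $\Btilde_{-1} = A$ with $p_{-1} = f$ and, given a CW-complex $\Btilde_n$ containing $A$ as a subcomplex together with a map $p_n\colon \Btilde_n \to B$ extending $f$, form $\Btilde_{n+1}$ by attaching an $(n{+}1)$-cell for every pair $(\phi, H)$ in which $\phi\colon S^n \to \Btilde_n$ is a cellular map (with $S^n$ given a fixed standard CW structure) and $H\colon D^{n+1} \to B$ satisfies $H|_{S^n} = p_n \circ \phi$; the maps $H$ assemble into an extension $p_{n+1}\colon \Btilde_{n+1} \to B$ of $p_n$. Because each attaching map is cellular, $\Btilde_{n+1}$ is a CW-complex containing $\Btilde_n$, and hence $A$, as a subcomplex. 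One then sets $\Btilde = \colim_n \Btilde_n$ and $p = \colim_n p_n$. Functoriality with respect to a square as in the statement is immediate from the indexing: given cellular $f\colon A \to A'$ and the compatible map $g\colon B \to B'$, an inductively constructed cellular map $\Btilde_n \to \Btilde'_n$ sends the cell of $\Btilde_{n+1}$ indexed by $(\phi, H)$ to the cell of $\Btilde'_{n+1}$ indexed by the pair obtained by postcomposing $\phi$ and $H$ with the appropriate maps, which is again a valid attaching pair because the composite of cellular maps is cellular.

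The key step is verifying that $p\colon \Btilde \to B$ is a weak equivalence. Using the standard criterion, it suffices to show that every commutative square with $\alpha\colon S^{n-1} \to \Btilde$ and $\beta\colon D^n \to B$ satisfying $\beta|_{S^{n-1}} = p \circ \alpha$ admits a map $\gamma\colon D^n \to \Btilde$ with $\gamma|_{S^{n-1}} = \alpha$ and $p \circ \gamma \homotopic \beta$ rel $S^{n-1}$. Since $\Btilde$ is an honest CW-complex, cellular approximation replaces $\alpha$ by a homotopic cellular map $\alpha'$ factoring through the $(n{-}1)$-skeleton, and hence through $\Btilde_{n-1}$; extending $\beta$ along the homotopy produces $\beta'\colon D^n \to B$ with $\beta'|_{S^{n-1}} = p_{n-1} \circ \alpha'$, so the pair $(\alpha', \beta')$ is in the indexing set at stage $n$ and the characteristic map of the corresponding $n$-cell gives the required lift, which one then corrects on $S^{n-1}$ using the homotopy extension property.

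I expect the main obstacle to be the bookkeeping around the fact that $\Btilde_n$ is not an $n$-dimensional complex, since it contains all the cells of $A$, some possibly of dimension greater than $n$; ``cellular'' at each stage must therefore be interpreted relative to the full CW structure actually present on $\Btilde_n$, and one must verify that the $(n{-}1)$-skeleton of $\Btilde$ coincides with the $(n{-}1)$-skeleton of $\Btilde_{n-1}$, so that the cellular approximation $\alpha'$ really does factor through $\Btilde_{n-1}$ and $(\alpha',\beta')$ really does appear in the indexing set at stage $n$. Once this compatibility is established, the induction and the functoriality proceed along the same lines as for \thmref{thm:cwapprox}.
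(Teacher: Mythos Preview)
Your proposal is correct and follows essentially the same approach as the paper: the construction (attach $n$-cells indexed by squares whose attaching map $S^{n-1}\to A_{n-1}$ is cellular), the functoriality argument via postcomposition, and the reliance on cellular approximation in the weak-equivalence step all match. The only cosmetic difference is that the paper packages the weak-equivalence proof as an induction showing $A_{n}\to B$ is $n$-connected rather than as a single lifting argument on the colimit; the skeleton bookkeeping you flag as the main obstacle is exactly what makes that induction go through.
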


\thmref{thm:subcomp} can also be used to obtain a functorial
CW-approximation to a space $B$ by applying it to the map $\emptyset
\to B$, but we show in \propref{prop:SameCW} that this produces the
same result as using \thmref{thm:cwapprox}.

The proof of \thmref{thm:subcomp} is in \secref{sec:subcomp}.

\begin{prop}
  \label{prop:SameCW}
  If $\Btilde \to B$ is the CW-approximation to $B$ obtained by
  applying the factorization of \thmref{thm:cwapprox} to $\emptyset
  \to B$ and $\Bhat \to B$ is the CW-approximation to $B$ obtained by
  applying the factorization of \thmref{thm:subcomp} to $\emptyset \to
  B$, then there is a natural isomorphism $\Bhat \to \Btilde$ that
  makes the diagram
  \begin{displaymath}
    \xymatrix@R=1ex{
      {\Bhat} \ar[dr] \ar[dd]\\
      & {B}\\
      {\Btilde} \ar[ur]
    }% xymatrix
  \end{displaymath}
  commute.
\end{prop}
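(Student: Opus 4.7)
The plan is to compare the two constructions stage by stage when they are both started from $\emptyset$, and to observe that the extra restriction imposed by \thmref{thm:subcomp}—which is there to force the added cells to respect the CW-structure of $A$—is vacuous when $A$ itself is empty, so that the two iterative constructions produce identical output at every stage.

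Concretely, I would unpack both factorizations into their defining inductive procedures. Each produces $\Btilde$ (resp.\ $\Bhat$) as a (transfinite or countable) colimit of subcomplexes $\Btilde_n \to B$ (resp.\ $\Bhat_n \to B$) obtained by attaching, at stage $n$, one $n$\nobreakdash-cell for every diagram indexing a homotopy-group element that needs to be killed. In \thmref{thm:cwapprox} the attaching maps $S^{n-1}\to\Btilde_n$ are allowed to be arbitrary, while in \thmref{thm:subcomp} they are required to factor through the $(n-1)$-skeleton of the CW-complex built so far (so that $A \to \Btilde$ remains a subcomplex inclusion). Starting from $A=\emptyset$, set $\Bhat_0=\Btilde_0=\emptyset$ and proceed by induction: assuming a natural isomorphism $\Bhat_n\iso\Btilde_n$ of CW-complexes over $B$, observe that in \thmref{thm:cwapprox} applied to $\emptyset\to B$ the complex $\Btilde_n$ is built entirely out of cells of dimension strictly less than $n$, so its $(n-1)$-skeleton equals $\Btilde_n$ itself. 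Consequently the factor-through-the-$(n-1)$-skeleton restriction in \thmref{thm:subcomp} is automatic, and the indexing sets of cells attached at stage $n+1$ coincide under the isomorphism $\Bhat_n\iso\Btilde_n$. Hence the isomorphism extends to stage $n+1$, and passing to the colimit yields the required natural isomorphism $\Bhat\to\Btilde$ over $B$.

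The only subtle point—and the one I would spell out carefully—is the dimension bookkeeping that guarantees $\Btilde_n$ has no cells of dimension $\ge n$ when the construction of \thmref{thm:cwapprox} starts from $\emptyset$; this is what makes the additional constraint of \thmref{thm:subcomp} invisible. Once that is in hand, naturality of the isomorphism in $B$ is immediate, since both factorizations are functorial by construction and the inductive identification we produce is defined purely in terms of the indexing data that both procedures use. Commutativity of the displayed triangle then follows because both $\Bhat\to B$ and $\Btilde\to B$ are the colimits of the same structure maps $\Btilde_n\to B$ under the stage-by-stage identification.
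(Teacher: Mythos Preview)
Your proposal is correct and follows essentially the same approach as the paper: both argue that when the construction starts from $A=\emptyset$, the space $A_{n}$ (your $\Btilde_{n+1}$) built at stage $n$ is an $n$-dimensional CW-complex, so the cellularity constraint imposed in the second construction is automatically satisfied and the two sequences coincide stage by stage. The paper's proof is terser and phrases the constraint as ``$\alpha\colon S^{n-1}\to A_{n-1}$ is cellular'' rather than ``factors through the $(n-1)$-skeleton,'' but the dimension bookkeeping you highlight is exactly the point it uses.
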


The proof of \propref{prop:SameCW} is in \secref{sec:SameCW}.

%--------------------------------------------------------------------
\subsection{Relative CW-approximation}
\label{sec:relaprox}

The constructions of \thmref{thm:cwapprox} and \thmref{thm:subcomp}
can be used to create relative CW-approximations.
\begin{thm}
  \label{thm:subspace}
  If $(B',B)$ is a pair of spaces (i.e., if $B$ is a subspace of the
  space $B'$) then in the commutative square
  \begin{displaymath}
    \xymatrix{
      {\Btilde} \ar[r]^{\ftilde} \ar[d]
      & {\Btilde'} \ar[d]\\
      {B} \ar[r]_{f}
      & {B'}
    }
  \end{displaymath}
  obtained by applying the factorization of \thmref{thm:cwapprox} to
  the maps $\emptyset \to B$ and $\emptyset \to B'$, the map
  $\ftilde\colon \Btilde \to \Btilde'$ is an inclusion of a
  subcomplex.
\end{thm}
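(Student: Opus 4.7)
The plan is to exploit the fact that the construction of \thmref{thm:cwapprox} attaches cells indexed by \emph{all} available attaching data rather than by any choices. Since $f\colon B\to B'$ is an inclusion, every indexing datum for $B$ yields one for $B'$ by post-composition with $f$, so the cells of $\Btilde$ will embed into those of $\Btilde'$ as a sub-indexing family attached by identical maps.

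Concretely, I expect the construction to build $\Btilde$ as a sequential colimit $\Btilde=\colim_n \Btilde_n$ starting from $\Btilde_{-1}=\emptyset$, in which $\Btilde_{n+1}$ is obtained from $\Btilde_n$ by a pushout
\begin{displaymath}
  \xymatrix{
    {\coprod_{\alpha}S^n} \ar[r]\ar[d]
    & {\Btilde_n}\ar[d]\\
    {\coprod_{\alpha}D^{n+1}} \ar[r]
    & {\Btilde_{n+1}}
  }
\end{displaymath}
where $\alpha$ ranges over all commutative squares
\begin{displaymath}
  \xymatrix{
    {S^n}\ar[r]\ar[d]
    & {\Btilde_n} \ar[d]\\
    {D^{n+1}}\ar[r]
    & {B\Period}
  }
\end{displaymath}
I will prove by induction on $n$ that the functorial map $\ftilde_n\colon\Btilde_n\to\Btilde'_n$ produced by \thmref{thm:cwapprox} is the inclusion of a subcomplex and is compatible with the structure maps down to $B$ and $B'$. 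The base case is trivial. For the inductive step, each indexing square $\alpha$ for $\Btilde_{n+1}$ is sent to an indexing square $\alpha'$ for $\Btilde'_{n+1}$ by post-composing its top arrow with $\ftilde_n$ and its bottom arrow with $f$; this assignment is injective, and the attaching map of $\alpha'$ equals the attaching map of $\alpha$ followed by $\ftilde_n$. Hence $\Btilde_{n+1}$ is obtained from $\Btilde_n$ by attaching a sub-family of the cells added to $\Btilde'_n$ along identical attaching maps, which exhibits $\ftilde_{n+1}$ as a subcomplex inclusion. Passing to the sequential colimit preserves this, proving the theorem.

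The main delicate point is confirming that the natural map $\ftilde$ supplied by the functoriality of \thmref{thm:cwapprox} is literally the cell-by-cell inclusion described above. This should reduce to inspecting how the construction implements naturality: morphisms $B\to B'$ act on indexing squares by post-composition, and therefore on cells by sending the cell indexed by $\alpha$ to the cell indexed by $\alpha'$ via the identity map of $D^{n+1}$. It is precisely the use of the \emph{full} collection of squares (rather than a chosen subset) that makes this pullback assignment well defined on the nose rather than merely up to homotopy, and so makes $\ftilde$ a genuine inclusion of cells.
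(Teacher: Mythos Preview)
Your proposal is correct and follows essentially the same approach as the paper: an induction on the skeletal filtration showing that, because $B\to B'$ and (inductively) $\Btilde_{n}\to\Btilde'_{n}$ are inclusions, the indexing squares for the $(n{+}1)$-cells of $\Btilde$ inject into those for $\Btilde'$, making each $\ftilde_{n+1}$ a subcomplex inclusion. Your write-up is in fact somewhat more detailed than the paper's, which dispatches the inductive step in a single sentence; your explicit remarks about injectivity of the index map and about the functorial $\ftilde$ coinciding with the cell-by-cell inclusion are welcome clarifications.
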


Thus, \thmref{thm:cwapprox} creates relative CW-approximations for
pairs, triads, etc.  Alternatively, given a pair $(B',B)$, one could
apply \thmref{thm:subcomp} to the map $\emptyset \to B$ to obtain
$\Btilde \to B$ and then apply \thmref{thm:subcomp} to the composition
$\Btilde \to B \to B'$ to obtain $\Btilde' \to B'$, and $\Btilde$
would be a subcomplex of $\Btilde'$.  The proof of
\thmref{thm:subspace} is in \secref{sec:subspace}.

\begin{thm}[CW-approximation commutes with intersections]
  \label{thm:Intersect}
  If $X$ is a space, let $\CW(X)$ denote the CW-complex obtained by
  applying the factorization of \thmref{thm:cwapprox} to the map
  $\emptyset \to X$.  If $X$ is a space, $S$ is a set, and for every
  element $s$ of $S$ we have a subspace $X_{s}$ of $X$, then each
  $\CW(X_{s})$ is a subcomplex of $\CW(X)$, and
  \begin{displaymath}
    \bigcap_{s\in S} \CW(X_{s}) =
    \CW\Bigl(\bigcap_{s\in S} X_{s}\Bigr) \Period 
  \end{displaymath}
\end{thm}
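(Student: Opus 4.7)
The first claim is immediate: applying \thmref{thm:subspace} to the pair $(X, X_{s})$ shows that $\CW(X_{s})$ is a subcomplex of $\CW(X)$ for every $s\in S$. For the intersection equality, I would set $Y = \bigcap_{s\in S}X_{s}$ and observe, using \thmref{thm:subspace} applied to each pair $(X_{s}, Y)$ together with the functoriality of the construction, that $\CW(Y)$ sits inside each $\CW(X_{s})$ as a subcomplex, compatibly with their common inclusion into $\CW(X)$. Thus $\CW(Y) \subseteq \bigcap_{s\in S}\CW(X_{s})$ inside $\CW(X)$, and only the reverse containment requires real work.

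My plan for the reverse containment is an induction on the stages of the construction underlying \thmref{thm:cwapprox}. That construction attaches cells indexed by explicit data in the target: the $0$-cells correspond to points of $X$, and the $(n{+}1)$-cells attached at the next stage correspond to pairs consisting of an attaching map $S^{n} \to \CW(X)_{n}$ together with an extension $D^{n+1} \to X$ of the composite map. The crucial point, which is already implicit in the proof of \thmref{thm:subspace}, is that for a subspace $X_{s} \subseteq X$ the subcomplex $\CW(X_{s}) \subseteq \CW(X)$ consists of exactly those cells whose indexing data factors through $X_{s}$. I would first isolate this identification, reading it off from the construction.

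With that identification in hand, the induction is routine. A $0$-cell of $\CW(X)$ that lies in every $\CW(X_{s})$ corresponds to a point lying in every $X_{s}$, hence in $Y$, so it is a $0$-cell of $\CW(Y)$. Inductively, assuming $\bigcap_{s\in S}\CW(X_{s})_{n} = \CW(Y)_{n}$, an $(n{+}1)$-cell of $\CW(X)$ lying in every $\CW(X_{s})$ has an attaching map whose compact image lies in each subcomplex $\CW(X_{s})_{n}$, hence in their intersection $\CW(Y)_{n}$, and has a disk extension $D^{n+1} \to X$ whose image lies in every $X_{s}$, hence in $Y$. Thus this cell is exactly one of those attached in constructing $\CW(Y)_{n+1}$. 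Passing to the colimit over $n$ yields $\bigcap_{s\in S}\CW(X_{s}) \subseteq \CW(Y)$, completing the argument.

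The main obstacle is to articulate precisely the cell indexing in the construction of \thmref{thm:cwapprox} and to verify the identification described in the second paragraph; once that is made explicit, the intersection argument is essentially bookkeeping, and no new compactness or set-theoretic subtlety arises because the family $S$ need not be finite: the conclusion is forced cell-by-cell on $\CW(X)$.
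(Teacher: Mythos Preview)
Your proposal is correct and follows essentially the same approach as the paper: both argue by induction on the stages $A_{n}$ of the construction, matching cells by their indexing data $(\alpha\colon S^{n-1}\to A_{n-1},\ \beta\colon D^{n}\to X)$ and using that a cell of $\CW(X)$ lies in the subcomplex $\CW(X_{s})$ exactly when its indexing pair factors through $(A^{s}_{n-1},X_{s})$. The only cosmetic difference is organizational: you treat the inclusion $\CW(Y)\subseteq\bigcap_{s}\CW(X_{s})$ separately via \thmref{thm:subspace} and then prove the reverse containment by induction, whereas the paper proves $A^{S}_{n}=\bigcap_{s}A^{s}_{n}$ directly at each stage by checking injectivity and surjectivity; the content is the same.
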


The proof of \thmref{thm:Intersect} is in \secref{sec:Intersect}.

%--------------------------------------------------------------------
%--------------------------------------------------------------------
\section{The proof \thmref{thm:cwapprox}}
\label{sec:proof}

We construct the factorization in \secref{sec:construct}, show that
the map $\Btilde \to B$ is a weak equivalence in \secref{sec:weq}, and
show that the construction is functorial in \secref{sec:func}.

%--------------------------------------------------------------------
\subsection{The construction}
\label{sec:construct}

We will construct a sequence of spaces
\begin{displaymath}
  \xymatrix{
    *+[l]{A = A_{-1}} \ar[r] \ar[d]
    & {A_{0}} \ar[r] \ar[dl]
    & {A_{1}} \ar[r] \ar[dll]
    & {\cdots}\\
    {B}
  }
\end{displaymath}
that map to $B$ and then let $\Btilde = \colim_{n}A_{n}$.  Each
$A_{n}$ for $n \ge 0$ will be constructed from $A_{n-1}$ by attaching
$n$-cells in such a way that the map $A_{n} \to B$ is $n$-connected
(see \notref{not:connected}).  Since spheres and disks are compact,
any map from a sphere or disk to $\colim_{n}A_{n}$ will factor through
some $A_{n}$, and so we will have $\pi_{i}\Btilde =
\colim_{n}\pi_{i}A_{n}$ for all $i \ge 0$, and the map $\Btilde \to B$
will be a weak equivalence.

We begin by letting $A_{-1} = A$, and then defining
\begin{displaymath}
  A_{0} = A_{-1} \amalg \Bigl(\coprod_{D^{0} \to B} D^{0}\Bigr)
  \Period
\end{displaymath}
That is, we let $A_{0}$ be the coproduct of $A_{-1}$ with a single
point for each map of a point to $B$; this maps to $B$ by taking the
$D^{0}$ indexed by a map $D^{0} \to B$ to $B$ by that indexing map.

To construct $A_{1}$ we construct the pushout
\begin{displaymath}
  \xymatrix@=1em{
    {\hspace{-3em}
      \coprod_{\mappullback{(S^{0},A_{0})}{(S^{0},B)}{(D^{1},B)}}
      \hspace{-4.5em}S^{0}}
    \ar[rr] \ar[dd]
    && {A_{0}} \ar@{..>}[dl] \ar[dd]\\
    & {A_{1}} \ar@{..>}[dr]\\
    {\hspace{-3em}
      \coprod_{\mappullback{(S^{0},A_{0})}{(S^{0},B)}{(D^{1},B)}}
      \hspace{-4.5em}D^{1}}
    \ar@{..>}[ur] \ar[rr]
    && {B}
  }
\end{displaymath}
where $\mappullback{(S^{0},A_{0})}{(S^{0},B)}{(D^{1},B)}$ is the set
of commutative squares
\begin{displaymath}
  \xymatrix{
    {S^{0}} \ar[r] \ar[d]
    & {A_{0}} \ar[d]\\
    {D^{1}} \ar[r]
    & {B \Period}
  }
\end{displaymath}
That is, for every such square we attach a $1$-cell to $A_{0}$, and we
use the bottom horizontal map of that square to map that attached
$1$-cell to $B$.

If $n > 1$ and we have constructed $A_{n-1}$ along with it's map to
$B$, we construct $A_{n}$ by constructing the pushout
\begin{displaymath}
  \xymatrix@=1em{
    {\hspace{-3.5em}
      \coprod_{\mappullback{(S^{n-1},A_{n-1})}{(S^{n-1},B)}{(D^{n},B)}}
      \hspace{-6em}S^{n-1}}
    \ar[rr] \ar[dd]
    && {A_{n-1}} \ar@{..>}[dl] \ar[dd]\\
    & {A_{n}} \ar@{..>}[dr]\\
    {\hspace{-4.5em}
      \coprod_{\mappullback{(S^{n-1},A_{n-1})}{(S^{n-1},B)}{(D^{n},B)}}
      \hspace{-6em}D^{n}}
    \ar@{..>}[ur] \ar[rr]
    && {B}
  }
\end{displaymath}
where $\mappullback{(S^{n-1},A_{n-1})}{(S^{n-1},B)}{(D^{n},B)}$ is the set
of commutative squares
\begin{displaymath}
  \xymatrix{
    {S^{n-1}} \ar[r] \ar[d]
    & {A_{n-1}} \ar[d]\\
    {D^{n}} \ar[r]
    & {B \Period}
  }
\end{displaymath}
That is, for every such square we attach an $n$-cell to $A_{n-1}$, and
we use the bottom horizontal map of that square to map that attached
$n$-cell to $B$.

To complete the construction we let $\Btilde = \colim_{n} A_{n}$, and
the map $A \to \Btilde$ is clearly a relative CW-complex.  We show
that the map $\Btilde \to B$ is a weak equivalence in
\secref{sec:weq}, and we show that the construction is natural in
\secref{sec:func}.

%--------------------------------------------------------------------
\subsection{The homotopy groups of the spaces in the construction}
\label{sec:weq}

\begin{notation}
  \label{not:connected}
  If $f\colon X \to Y$ is a map and $n \ge 0$, then we will say that
  $f$ is \emph{$n$-connected} if
  \begin{itemize}
  \item the set of path components of $X$ maps onto the set of path
    components of $Y$, and
  \item for every choice of basepoint in $X$ the induced map of
    homotopy groups (for $i > 0$) or sets (for $i = 0$) $\pi_{i}(X)
    \to \pi_{i}(Y)$ is an isomorphism for $i < n$ and an epimorphism
    for $i = n$.
  \end{itemize}
\end{notation}

\begin{lem}
  \label{lem:connected}
  For each $n \ge 0$ the map $A_{n} \to B$ is $n$-connected.
\end{lem}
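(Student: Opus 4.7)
The plan is to proceed by induction on $n$. For the base case $n = 0$, the construction includes in $A_0$ a summand $D^0$ mapping to $b$ for every point $b \in B$, so $A_0 \to B$ is surjective on underlying sets and in particular surjective on path components; this is precisely $0$-connectedness.

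For the inductive step, assuming $A_{n-1} \to B$ is $(n-1)$-connected, I would first observe that since $A_n$ is built from $A_{n-1}$ by attaching only $n$-cells, the inclusion $A_{n-1} \hookrightarrow A_n$ induces an isomorphism on $\pi_i$ for $i < n-1$ and a surjection on $\pi_{n-1}$ (a standard consequence of the long exact sequence of the pair $(A_n,A_{n-1})$, which is $(n-1)$-connected). Composing with the inductive hypothesis immediately yields that $\pi_i(A_n) \to \pi_i(B)$ is an isomorphism for $i < n-1$ and a surjection for $i = n-1$. What remains is injectivity on $\pi_{n-1}$ and surjectivity on $\pi_n$, and this is where the ``attach a cell for every possible commutative square'' construction is essential.

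For injectivity on $\pi_{n-1}$ (when $n \ge 2$): take $[\alpha] \in \ker\bigl(\pi_{n-1}(A_n) \to \pi_{n-1}(B)\bigr)$ and lift it to $[\tilde\alpha] \in \pi_{n-1}(A_{n-1})$ represented by some $\sigma\colon S^{n-1} \to A_{n-1}$. Since the composite $S^{n-1} \to A_{n-1} \to B$ is null-homotopic, it extends to a map $D^n \to B$, yielding a commutative square that lies in the indexing set $\mappullback{(S^{n-1},A_{n-1})}{(S^{n-1},B)}{(D^n,B)}$; by construction an $n$-cell was attached along $\sigma$, killing $[\tilde\alpha]$ in $A_n$ and forcing $[\alpha] = 0$. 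The $n = 1$ case of injectivity on $\pi_0$ is analogous, with paths in $B$ in place of null-homotopies. For surjectivity on $\pi_n$: given a basepoint $a_0 \in A_{n-1}$ with $b_0 = f(a_0)$ and $[\gamma] \in \pi_n(B, b_0)$ represented by $g\colon (D^n, \partial D^n) \to (B, b_0)$, the constant map $S^{n-1} \to A_{n-1}$ at $a_0$ together with $g$ forms a commutative square in the indexing set, so the corresponding attached $n$-cell provides a characteristic map $(D^n, \partial D^n) \to (A_n, a_0)$ representing a class in $\pi_n(A_n, a_0)$ that projects to $[\gamma]$.

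The main obstacle I expect is the careful handling of basepoints and path components. Verifying the $\pi_i$ conditions at an arbitrary basepoint of $A_n$ requires knowing that the basepoint can be moved along a path into $A_{n-1}$, and the $n = 1$ case of injectivity on $\pi_0$ needs a small preliminary reduction (using that every path component of $A_1$ meets $A_0$, since $A_1 \setminus A_0$ is a disjoint union of open $1$-cells attached along $A_0$) before the commutative-square argument can be applied.
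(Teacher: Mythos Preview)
Your proposal is correct and follows essentially the same approach as the paper: an induction on $n$, using that attaching $n$-cells leaves $\pi_{i}$ unchanged for $i<n-1$ and surjects on $\pi_{n-1}$, then obtaining injectivity on $\pi_{n-1}$ from the cells attached along maps $S^{n-1}\to A_{n-1}$ that become null in $B$, and surjectivity on $\pi_{n}$ from the cells attached along constant maps $S^{n-1}\to A_{n-1}$. Your explicit handling of basepoints and of the $n=1$ case is a bit more careful than the paper's, but the argument is the same.
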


\begin{proof}
  We will show inductively on $n$ that the map $A_{n} \to B$ is
  $n$-connected.

  The space $A_{0}$ was constructed to map onto $B$, and so the map
  $A_{0} \to B$ is $0$-connected

  The space $A_{1}$ was constructed by attaching $1$-cells to $A_{0}$
  that connected any pair of points in $A_{0}$ whose images were in
  the same path component of $B$; thus, the set of path components of
  $A_{1}$ maps isomorphically to the set of path components of $B$.
  In addition, a loop was wedged at every point of $A_{0}$ for every
  loop in $B$ at the image of that point; thus, for every basepoint of
  $A_{1}$, the fundamental group of $A_{1}$ maps epimorphically onto
  the fundamental group of $B$.  Thus, the map $A_{1} \to B$ is
  $1$-connected.

  Suppose now that $n > 1$ and that the map $A_{n-1} \to B$ is
  $(n-1)$-connected.  Since $A_{n}$ is constructed from $A_{n-1}$ by
  attaching $n$-cells, for every choice of basepoint we have
  $\pi_{i}(A_{n-1}) \iso \pi_{i}(A_{n})$ for $i < n-1$ and
  $\pi_{n-1}(A_{n})$ is a quotient of $\pi_{n-1}(A_{n-1})$.  For every
  map $\alpha\colon S^{{n-1}} \to A_{n-1}$ such that the composition
  with $A_{n-1} \to B$ is nullhomotopic, we've attached an $n$-cell,
  and so the composition $S^{n-1} \xrightarrow{\alpha} A_{n-1} \to
  A_{n}$ is nullhomotopic.  Thus, $\pi_{n-1}(A_{n}) \to \pi_{n-1}(B)$
  is an isomorphism for every choice of basepoint.  In addition, for
  every map $\beta\colon D^{n}/S^{{n-1}} \to B$ for which the image of
  the collapsed $S^{n-1}$ is in the image of $A_{n-1} \to B$, we've
  wedged on a copy of $D^{n}/S^{n-1}$ to $A_{n-1}$ and mapped it to
  $B$ using $\beta$, and so $\pi_{n}(A_{n}) \to \pi_{n}(B)$ is
  surjective for every choice of basepoint.  Thus, the map $A_{n} \to
  B$ is $n$-connected.  This completes the induction.
\end{proof}

We now let $\Btilde = \colim_{n}A_{n}$.  Since spheres and disks are
compact, every map from a sphere or disk to $\colim_{n}A_{n}$ factors
through some $A_{n}$, and so we have $\colim_{n}\pi_{i}A_{n} \iso
\pi_{i}\Btilde$ for $i \ge 0$.  Since the map $\pi_{i}A_{n} \to
\pi_{i}B$ is an isomorphism for $n > i$, the map $\pi_{i}\Btilde \to
\pi_{i}B$ is an isomorphism for $i \ge 0$, and so the map $\Btilde \to
B$ is a weak equivalence.

%--------------------------------------------------------------------
\subsection{The functoriality of the construction}
\label{sec:func}

We will now show that the construction of \secref{sec:construct} is
functorial, i.e., that if we have a commutative square
\begin{displaymath}
  \xymatrix{
    {A} \ar[r]^{f} \ar[d]
    & {A'} \ar[d]\\
    {B} \ar[r]_{g}
    & {B'}
  }
\end{displaymath}
and we apply the construction of \secref{sec:construct} to $A \to B$
to obtain $A \to \Btilde \to B$ and to $A' \to B'$ to obtain $A' \to
\Btilde' \to B'$, then there is a natural commutative diagram
\begin{displaymath}
  \xymatrix{
    {A} \ar[r]^{f} \ar[d]
    & {A'} \ar[d]\\
    {\Btilde} \ar[r]^{\gtilde} \ar[d]
    & {\Btilde'} \ar[d]\\
    {B} \ar[r]_{g}
    & {B' \Period}
  }
\end{displaymath}

We define $\gtilde$ by defining $f_{n}\colon A_{n} \to A_{n}'$
inductively on the constructions of $\Btilde$ and $\Btilde'$.

To begin, we have
\begin{displaymath}
  A_{0} = A_{-1} \amalg \Bigl(\coprod_{D^{0} \to B} D^{0}\Bigr)
  \qquad\text{and}\qquad
  A_{0}' = A_{-1}' \amalg \Bigl(\coprod_{D^{0} \to B'} D^{0}\Bigr)
\end{displaymath}
and we define $f_{0}\colon A_{0} \to A_{0}'$ by sending the copy of
$D^{0}$ indexed by $\alpha\colon D^{0} \to B$ to the copy of $D^{0}$
indexed by $g\circ\alpha\colon D^{0} \to B'$.

For the inductive step, suppose that $n > 0$ and that we've defined
$f_{n-1}\colon A_{n-1} \to A_{n-1}'$.  The space $A_{n}$ is
constructed by attaching an $n$-cell to $A_{n-1}$ for each commutative
square
\begin{displaymath}
  \xymatrix{
    {S^{n-1}} \ar[r]^{\alpha} \ar[d]
    & {A_{n-1}} \ar[d]\\
    {D^{n}} \ar[r]_{\beta}
    & {B}
  }
\end{displaymath}
We take the cell attached to $A_{n-1}$ by the map $\alpha$ to the cell
attached to $A_{n-1}'$ by the map $f_{n-1}\circ\alpha$ indexed by the
outer commutative rectangle
\begin{displaymath}
  \xymatrix{
    {S^{n-1}} \ar[r]^{\alpha} \ar[d]
    & {A_{n-1}} \ar[d] \ar[r]^{f_{n-1}}
    & {A_{n-1}'} \ar[d]\\
    {D^{n}} \ar[r]_{\beta}
    & {B} \ar[r]_{g}
    & {B'}
  }
\end{displaymath}
Doing that for each $n$-cell attached to $A_{n-1}$ defines
$f_{n}\colon A_{n} \to A_{n}'$.

That completes the induction, and we let $\gtilde\colon \Btilde \to
\Btilde'$ be $\colim_{n}f_{n}$.

%--------------------------------------------------------------------
%--------------------------------------------------------------------
\section{The proof of \thmref{thm:subcomp}}
\label{sec:subcomp}

We construct the factorization in \secref{sec:subcompconst}, show that
the map $\Btilde \to B$ is a weak equivalence in
\secref{sec:subcompwe}, and show that the construction is functorial
in \secref{sec:subcompfunc}.

%--------------------------------------------------------------------
\subsection{The construction}
\label{sec:subcompconst}

We use a modification of the construction of \secref{sec:construct}.
We construct $A_{0}$ exactly as in \secref{sec:construct}, but when $n
> 0$ and we are constructing $A_{n}$ out of $A_{n-1}$, we attach only
the $n$-cells indexed by commutative squares
\begin{displaymath}
  \xymatrix{
    {S^{n-1}} \ar[r]^{\alpha} \ar[d]
    & {A_{n-1}} \ar[d]\\
    {D^{n}} \ar[r]_{\beta}
    & {B}
  }
\end{displaymath}
for which $\alpha\colon S^{n-1} \to A_{n-1}$ is a cellular map.

%--------------------------------------------------------------------
\subsection{The homotopy groups of the spaces in the construction}
\label{sec:subcompwe}

\begin{lem}
  \label{lem:subcompconn}
  For each $n \ge 0$ the map $A_{n} \to B$ is $n$-connected.
\end{lem}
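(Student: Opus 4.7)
The plan is to mimic the induction of \lemref{lem:connected} step by step, using the cellular approximation theorem to compensate for the fact that we now only attach cells along cellular attaching maps rather than along all maps $S^{n-1} \to A_{n-1}$.

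First, I would establish as a preliminary (inductively, in parallel with the main induction) that each $A_n$ is in fact a CW-complex, so that the cellular approximation theorem applies to maps $S^{n-1} \to A_{n-1}$. The base $A_{-1} = A$ is a CW-complex by hypothesis, $A_0$ is the disjoint union of $A$ with a discrete set and is therefore a CW-complex, and for $n \ge 1$ the space $A_n$ is obtained from the CW-complex $A_{n-1}$ by attaching $n$-cells along cellular maps, which is again a CW-complex.

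The cases $n=0$ and $n=1$ proceed essentially verbatim as in the proof of \lemref{lem:connected}: the map $A_0 \to B$ is surjective on path components by construction; for $A_1 \to B$, a path in $B$ joining the images of two points of $A_0$ in the same component may, after connecting those points to nearby $0$-cells of $A_0$, be realized by a commutative square whose top map $S^0 \to A_0$ sends the two points of $S^0$ to $0$-cells and is thus cellular, and similarly any loop in $B$ based at the image of a $0$-cell of $A_0$ produces a square with cellular (constant) attaching map. For the inductive step $n > 1$, given a map $\alpha\colon S^{n-1} \to A_{n-1}$ whose composition to $B$ is nullhomotopic, apply cellular approximation to replace $\alpha$ by a homotopic cellular map $\alpha'\colon S^{n-1} \to A_{n-1}$; the composition $\alpha'\colon S^{n-1} \to A_{n-1} \to B$ is still nullhomotopic, hence bounds a map $\beta'\colon D^n \to B$, giving a commutative square in the indexing set of the construction. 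The corresponding attached $n$-cell kills $[\alpha'] = [\alpha]$ in $\pi_{n-1}(A_n)$. For surjectivity of $\pi_n(A_n) \to \pi_n(B)$, given $[\gamma] \in \pi_n(B, b_0)$ based at the image of a chosen basepoint of $A_n$, I would choose the basepoint to be a $0$-cell of $A_{n-1}$ (using that each $A_m$ is a CW-complex); then the constant map $S^{n-1} \to A_{n-1}$ at that $0$-cell is cellular, and together with $\gamma\colon D^n \to B$ it yields a commutative square in the indexing set, so an $n$-cell is attached whose image in $B$ realizes $[\gamma]$.

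The main obstacle is the basepoint bookkeeping in the cellular approximation step: cellular approximation is most cleanly applied as an unbased statement, while the notion of $n$-connectedness in \notref{not:connected} is stated basepoint-wise, so one must verify that the homotopy from $\alpha$ to its cellular approximation $\alpha'$ does not disturb the conclusion that $[\alpha]$ is killed in $\pi_{n-1}(A_n)$ at every chosen basepoint. This is handled by the standard observation that $[\alpha] = [\alpha']$ in $\pi_{n-1}(A_{n-1})$ up to a change-of-basepoint isomorphism along the cellular-approximation homotopy, so killing $[\alpha']$ in $\pi_{n-1}(A_n)$ kills $[\alpha]$ as well; and for the surjectivity claim, choosing basepoints to be $0$-cells is harmless since every path component of $A_{n-1}$ contains a $0$-cell.
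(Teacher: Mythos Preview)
Your proposal is correct and follows essentially the same approach as the paper: an induction on $n$ in which cellular approximation compensates for the restriction to cellular attaching maps, and vertices ($0$-cells) of $A_{n-1}$ are used to handle basepoints and the surjectivity of $\pi_{n}$. The paper is slightly terser about the basepoint bookkeeping and the fact that each $A_{n}$ is a CW-complex, but the substance of the argument is the same.
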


\begin{proof}
  We will show inductively on $n$ that the map $A_{n} \to B$ is
  $n$-connected.

  The space $A_{0}$ was constructed to map onto $B$, and so the map
  $A_{0} \to B$ is $0$-connected.

  The space $A_{1}$ was constructed by attaching $1$-cells to $A_{0}$
  that connected any pair of vertices in $A_{0}$ whose images were in
  the same path component of $B$; since every path component of
  $A_{0}$ contains at least one vertex, the set of path components of
  $A_{1}$ maps isomorphically to the set of path components of $B$.
  In addition, a loop was wedged at every vertex of $A_{0}$ for every
  loop in $B$ at the image of that vertex; since every path component
  of $B$ contains the image of a vertex of $A_{0}$, for every
  basepoint of $A_{1}$ the fundamental group of $A_{1}$ maps
  epimorphically onto the fundamental group of $B$.  Thus, the map
  $A_{1} \to B$ is $1$-connected.

  Suppose now that $n > 1$ and that the map $A_{n-1} \to B$ is
  $(n-1)$-connected.  Since $A_{n}$ is constructed from $A_{n-1}$ by
  attaching $n$-cells, for every choice of basepoint we have
  $\pi_{i}(A_{n-1}) \iso \pi_{i}(A_{n})$ for $i < n-1$ and
  $\pi_{n-1}(A_{n})$ is a quotient of $\pi_{n-1}(A_{n-1})$.  For every
  cellular map $\alpha\colon S^{n-1} \to A_{n-1}$ such that the
  composition with $A_{n-1} \to B$ is nullhomotopic, we've attached an
  $n$-cell, and so the composition $S^{n-1} \xrightarrow{\alpha}
  A_{n-1} \to A_{n}$ is nullhomotopic.  Since every map $S^{n-1} \to
  A_{n-1}$ is homotopic to a cellular map, $\pi_{n-1}(A_{n}) \to
  \pi_{n-1}(B)$ is an isomorphism for every choice of basepoint.  In
  addition, for every map $\beta\colon D^{n}/S^{{n-1}} \to B$ for
  which the image of the collapsed $S^{n-1}$ is in the image of a
  vertex of $A_{n-1}$, we've wedged on a copy of $D^{n}/S^{n-1}$ to
  that vertex of $A_{n-1}$ and mapped it to $B$ using $\beta$; since
  every path component of $B$ is in the image of a vertex of
  $A_{n-1}$, $\pi_{n}(A_{n}) \to \pi_{n}(B)$ is surjective for every
  choice of basepoint.  Thus, the map $A_{n} \to B$ is $n$-connected.
  This completes the induction.
\end{proof}

We now let $\Btilde = \colim_{n}A_{n}$.  Since spheres and disks are
compact, every map from a sphere or disk to $\colim_{n}A_{n}$ factors
through some $A_{n}$, and so we have $\colim_{n}\pi_{i}A_{n} \iso
\pi_{i}\Btilde$ for $i \ge 0$.  Since the map $\pi_{i}A_{n} \to
\pi_{i}B$ is an isomorphism for $n > i$, the map $\pi_{i}\Btilde \to
\pi_{i}B$ is an isomorphism for $i \ge 0$, and so the map $\Btilde \to
B$ is a weak equivalence.

%--------------------------------------------------------------------
\subsection{The functoriality of the construction}
\label{sec:subcompfunc}

We will now show that the construction of \secref{sec:subcompconst} is
functorial, i.e., that if we have a commutative square
\begin{displaymath}
  \xymatrix{
    {A} \ar[r]^{f} \ar[d]
    & {A'} \ar[d]\\
    {B} \ar[r]_{g}
    & {B'}
  }
\end{displaymath}
in which $f\colon A \to A'$ is a cellular map and we apply the
construction of \secref{sec:subcompconst} to $A \to B$ to obtain $A
\to \Btilde \to B$ and to $A' \to B'$ to obtain $A' \to \Btilde' \to
B'$, then there is a natural commutative diagram
\begin{displaymath}
  \xymatrix{
    {A} \ar[r]^{f} \ar[d]
    & {A'} \ar[d]\\
    {\Btilde} \ar[r]^{\gtilde} \ar[d]
    & {\Btilde'} \ar[d]\\
    {B} \ar[r]_{g}
    & {B' \Period}
  }
\end{displaymath}

We define $\gtilde$ by defining $f_{n}\colon A_{n} \to A_{n}'$
inductively on the constructions of $\Btilde$ and $\Btilde'$.  Since
each $f_{n}\colon A_{n} \to A_{n}'$ is a cellular map, the composition
of a cellular map $\alpha\colon S^{n-1} \to A_{n}'$ with
$f_{n-1}\colon A_{n-1} \to A_{n-1}'$ is also cellular, and so we have
an induced map $f_{n}\colon A_{n} \to A_{n}'$.  Thus, the induction
goes through, and we let $\gtilde\colon \Btilde \to \Btilde'$ be
$\colim_{n}f_{n}$.

%--------------------------------------------------------------------
%--------------------------------------------------------------------
\section{Proof of \propref{prop:SameCW}}
\label{sec:SameCW}

Since we are factorizing the map $\emptyset \to B$, in the sequence
$A_{-1} \to A_{0} \to A_{1} \to \cdots$ whose colimit is $\Btilde$
(see \secref{sec:construct}) the space $A_{-1}$ is empty.  Thus, for
each $n \ge 0$ the space $A_{n}$ is an $n$-dimensional CW-complex, and
so \emph{every} map $S^{n} \to A_{n}$ is a cellular map.  Thus, the
sequence constructed in \secref{sec:subcompconst} is exactly the same
as the sequence constructed in \secref{sec:construct}, and so their
colimits are the same.

%--------------------------------------------------------------------
%--------------------------------------------------------------------
\section{Proof of \thmref{thm:subspace}}
\label{sec:subspace}

We will show by induction that in the diagram
% \begin{displaymath}
%   \xymatrix{
%     *+[l]{\emptyset = A_{-1}}  \ar[r]
%     & {A_{0}} \ar[r] \ar[d]
%     & {A_{1}} \ar[r] \ar[d]
%     & {A_{2}} \ar[r] \ar[d]
%     & {\cdots}\\
%     *+[l]{\emptyset = A_{-1}'}  \ar[r]
%     & {A_{0}'} \ar[r]
%     & {A_{1}'} \ar[r]
%     & {A_{2}'} \ar[r]
%     & {\cdots}\\
%   }
% \end{displaymath}
\begin{displaymath}
  \xymatrix{
    {\emptyset = A_{-1}}  \ar[r]
    & {A_{0}} \ar[r] \ar[d]
    & {A_{1}} \ar[r] \ar[d]
    & {A_{2}} \ar[r] \ar[d]
    & {\cdots}\\
    {\emptyset = A_{-1}'}  \ar[r]
    & {A_{0}'} \ar[r]
    & {A_{1}'} \ar[r]
    & {A_{2}'} \ar[r]
    & {\cdots}\\
  }
\end{displaymath}
used to construct $\Btilde \to \Btilde'$, each map $A_{n} \to A_{n}'$
is an inclusion of a subcomplex.  The induction is begun because
$A_{0}$ has one point for every point of $B$ and $A_{0}'$ has one
point for every point of $B'$.

Now assume that $n > 0$ and that $A_{n-1} \to A_{n-1}'$ is an
inclusion of a subcomplex. Since the map $B \to B'$ is also an
inclusion, the set of $n$-cells to be attached to $A_{n-1}$ is a
subset of the set of $n$-cells to be attached to $A_{n-1}'$, and so
$A_{n} \to A_{n}'$ will also be an inclusion of a subcomplex.

%--------------------------------------------------------------------
%--------------------------------------------------------------------
\section{The proof of \thmref{thm:Intersect}}
\label{sec:Intersect}

Let $X_{S} = \cap_{s\in S} X_{s}$.
\begin{itemize}
\item Let $\emptyset = A_{-1} \to A_{0} \to A_{1} \to \cdots$ be the
  sequence created in the proof of \thmref{thm:cwapprox} whose colimit
  is $\CW(X)$,
\item let $\emptyset = A^{S}_{-1} \to A^{S}_{0} \to A^{S}_{1} \to
  \cdots$ be the sequence created in the proof of
  \thmref{thm:cwapprox} whose colimit is $\CW(X_{S})$, and
\item for each $s \in S$ let $\emptyset = A^{s}_{-1} \to A^{s}_{0} \to
  A^{s}_{1} \to \cdots$ be the sequence created in the proof of
  \thmref{thm:cwapprox} whose colimit is $\CW(X_{s})$.
\end{itemize}
The proof of \thmref{thm:subspace} shows that $A^{S}_{n}$ and
$A^{s}_{n}$ are subcomplexes of $A_{n}$ for all $s \in S$ and $n \ge
0$; we will show by induction that $A^{S}_{n} = \cap_{s\in S}
A^{s}_{n}$ for all $n \ge 0$.

Since $A^{S}_{0}$ is discrete with one point for each point of $X_{S}$
and for all $s \in S$ the space $A^{s}_{0}$ is discrete with one point
for each point of $X_{s}$, we have $A^{S}_{0} = \cap_{s\in S}
A^{s}_{0}$.

Assume now that $n > 0$ and $A^{S}_{n-1} = \cap_{s\in S}
A^{s}_{n-1}$.  The space $A^{S}_{n}$ is constructed by attaching an
$n$-cell to $A^{S}_{n-1}$ for each commutative square
\begin{displaymath}
  \xymatrix{
    {S^{n-1}} \ar[r] \ar[d]
    & *+[r]{A^{S}_{n-1} = \cap_{s\in S} A^{s}_{n-1}} \ar[d]\\
    {D^{n}} \ar[r]
    & *+[r]{X_{S} = \cap_{s\in S} X_{s}}
  }% xymatrix
\end{displaymath}
Since the maps $A^{S}_{n-1} \to A^{s}_{n-1}$ and $X_{S} \to X_{s}$ are
inclusions for all $s \in S$, each such $n$-cell corresponds to a
unique $n$-cell in $\cap_{s\in S} A^{s}_{n}$, i.e., the map $A^{S}_{n}
\to \cap_{s\in S} A^{s}_{n}$ is an injection.

To see that the map $A^{S}_{n} \to \cap_{s\in S} A^{s}_{n}$ is a
surjection, let
\begin{displaymath}
  \left.
  \vcenter{
    \xymatrix{
      {S^{n-1}} \ar[r]^-{\alpha_{s}} \ar[d]
      & {A^{s}_{n-1}} \ar[d]\\
      {D^{n}} \ar[r]_{\beta_{s}}
      & {X_{s}}
    }% xymatrix
  }% vcenter
  \right\}
  \text{for every $s \in S$}
\end{displaymath}
index $n$-cells of the $A^{s}_{n}$ that together define an $n$-cell of
$\cap_{s\in S} A^{s}_{n}$.  Since the maps $A^{s}_{n-1} \to A_{n-1}$
and $X_{s} \to X$ are all inclusions, the compositions $S^{n-1}
\xrightarrow{\alpha_{s}} A^{s}_{n-1} \to A_{n-1}$ are all equal and
the compositions $D^{n} \xrightarrow{\beta_{s}} X_{s} \to X$ are all
equal, and the diagram
\begin{displaymath}
  \xymatrix{
    {S^{n-1}} \ar[r]^-{\alpha_{s}} \ar[d]
    & {A^{s}_{n-1}} \ar[r]
    & {A_{n-1}} \ar[d]\\
    {D^{n}} \ar[r]_-{\beta_{s}}
    & {X_{s}} \ar[r]
    & {X}
  }% xymatrix
\end{displaymath}
(for any $s \in S$; the upper and lower compositions are all the same)
indexes an $n$-cell that was attached to $A_{n-1}$ when creating
$A_{n}$.  Since the upper composition factors uniquely through
$\cap_{s\in S} A^{s}_{n-1}$ and the lower composition factors uniquely
through $X_{S} = \cap_{s\in S} X_{s}$, those factorizations index an
$n$-cell that was attached to $A^{S}_{n-1}$ when creating $A^{S}_{n}$,
and that $n$-cell maps to our $n$-cell of $\cap_{s\in S} A^{s}_{n}$.

%--------------------------------------------------------------------
%--------------------------------------------------------------------
%--------------------------------------------------------------------
%--------------------------------------------------------------------
\begin{bibdiv} 
  \begin{biblist}

\bib{MCL}{book}{
   author={Hirschhorn, Philip S.},
   title={Model categories and their localizations},
   series={Mathematical Surveys and Monographs},
   volume={99},
   publisher={American Mathematical Society, Providence, RI},
   date={2003},
   pages={xvi+457},
}

  \end{biblist}
\end{bibdiv}
%--------------------------------------------------------------------

\end{document}